\newcommand{\R}{\mathbb{R}}
\renewcommand{\epsilon}{\varepsilon}
\renewcommand{\theta}{\vartheta}
\renewcommand{\phi}{\varphi}
\renewcommand{\Re}{\mathrm{Re}}
\theoremstyle{plain}
\newtheorem{teor}{Theorem}[section]
\newtheorem{prop}[teor]{Proposition}
\newtheorem{lem}[teor]{Lemma}
\newtheorem{rem}[teor]{Remark}
\theoremstyle{definition}
\title[Second order estimates for the quaternionic Calabi-Yau problem]{A remark on the second order estimates for the quaternionic Calabi-Yau problem on hyperk\"ahler manifolds}
\begin{document}
\thanks{This work was supported by GNSAGA of INdAM and by the PRIN project \lq\lq Differential-geometric aspects of manifolds via Global Analysis''  20225J97H5}
\subjclass[2020]{35R01, 53C55, 53C26, 32W50}

\address{Giovanni Gentili\\
Dipartimento di Matematica ``G. Peano'', Universit\`{a} degli studi di Torino \\
Via Carlo Alberto 10, Torino (Italy)}
\email{giovanni.gentili@unito.it}

\medskip

\address{Luigi Vezzoni \\
Dipartimento di Matematica ``G. Peano'', Universit\`{a} degli studi di Torino \\
Via Carlo Alberto 10, Torino (Italy)}

\email{luigi.vezzoni@unito}

\author{Giovanni Gentili and Luigi Vezzoni}

\date{\today}

\begin{abstract}
We revisit the second order estimate for solutions to the quaternionic Calabi--Yau problem on hyperkähler manifolds, originally established by Dinew and Sroka in~\cite{Dinew-Sroka}. In this note, we present a simplified argument to derive the estimate.
\end{abstract}

\maketitle

\section{Introduction}
The present paper focuses on the Calabi-Yau problem in hypercomplex geometry. The problem was proposed by Alesker and Verbitsky \cite{Alesker-Verbitsky (2010)} as a natural adaptation of the classical Calabi-Yau problem to hypercomplex geometry and it has been the research topic of several papers (see e.g. \cite{Alesker (2013),Alesker-Shelukhin (2017),Alesker-Verbitsky (2010),BGV,Ibero,Dinew-Sroka,GV,GV2,Sroka,Sroka22,SThesis} and the references therein). 

Given a compact hyperhermitian manifold $(M,I,J,K,\Omega)$ and a smooth function $F$ on $M$, the {\em hypercomplex Calabi-Yau problem} consists in finding $(\varphi,b)\in C^{\infty}(M)\times \R$ such that  
\begin{equation}\label{CY}
(\Omega+\partial\partial_J\phi)^n={\rm e}^{F+b}\Omega^n\,,\quad \Omega+\partial\partial_J\phi>0\,,\quad {\rm sup}_M \varphi=0\,. 
\end{equation}

Geometric applications of the problem are mostly relevant in the HKT case, i.e. when $\partial \Omega=0$ \cite{Grantcharov-Poon (2000),Howe-Papadopoulos (2000)}, since in this case the problem is strictly related to the seek of canonical hyperhermitian metrics. Indeed, in the HKT case under the assumption that the canonical bundle of $(M,I)$ is holomorphically trivial, the solvability of the problem would imply that in the quaternionic Bott-Chern class $[\Omega]_{\partial \partial_J}$ there is always a unique {\em balanced HKT} metric \cite{Verbitsky (2009)}. Such a metric is necessarily {\em Chern-Ricci flat} \cite{Ibero}.

The problem has at most one solution $(\varphi,b)$ and \eqref{CY} is usually called {\em quaternionic Monge-Amp\`ere equation}. The solvability of the problem in its full generality is still an open question, but some important advances have been established in the literature. 

According to the strategy of Yau to prove the Calabi conjecture \cite{Yau}, the natural approach for solving the problem is via the continuity method. 
The $C^0$ estimate is proved \cite{Alesker-Verbitsky (2010),Alesker-Shelukhin (2017),Sroka,Sroka22}, while from \cite[Theorem 3.1]{BGV} it follows that, in order to solve the problem, it is enough to show an upper bound on the Chern Laplacian of the solution. 

This last estimate was proved by Alesker on flat hyperk\"ahler manifolds \cite{Alesker (2013)} and more recently by Dinew and Sroka on hyperk\"ahler manifolds \cite{Dinew-Sroka}. 

The purpose of the present note is to present a simplified approach for proving the second order estimate of Dinew and Sroka. 
Our key observation, Lemma \ref{fund} below, is that if $\hat g$ is an hyperk\"ahler metric, then 
$$
{g}^{r\bar s}\hat R_{r\bar si\bar j}=0
$$
for {\em every hyperhermitian metric} $g$ on $M$, where $\hat R$ is the curvature of $\hat g$. This allows us to adopt a simple approach to prove the estimate. 
We already used the same observation in the parabolic case in \cite{Ibero}. 

More in detail, with respect to \cite{Dinew-Sroka}, we use a different test function $Q$, defined in \eqref{Q} below, in the maximum principle argument. Our choice yields fewer terms in the computations thus allowing us to avoid the usage of the critical equation ($d Q=0$ at a maximum point). Finally, in our argument we reinterpret the vanishing of certain terms by inspecting explicitly some properties of the curvature of the hyperk\"ahler metric, whereas in \cite{Dinew-Sroka} this was achieved somewhat implicitly. See Remark \ref{lastRem} for more details on this.

\bigskip 
\noindent {\bf Notation.} In this paper, we adopt the following notation: all tensor components are expressed with respect to complex coordinates that are holomorphic with respect to the first complex structure~$I$. We use a comma in the indices to denote partial derivatives. When differentiating functions, the comma is omitted.

\bigskip 
\noindent {\bf Acknowledgements.}
The subject of the present note was presented by the second-named author at the conference ``The 8\textsuperscript{th} Workshop on Complex Geometry and Lie Groups'' held in Osaka from March 10\textsuperscript{th} to 14\textsuperscript{th}, 2025. The authors are extremely grateful to the organizers — Anna Fino, Ryushi Goto, Keizo Hasegawa, Hisashi Kasuya, and Alberto Raffero — for hosting such a beautiful and well-organized conference.

\medskip 
\noindent Moreover the authors are very grateful to S\l{}awomir Dinew and Marcin Sroka for very useful discussions.

\section{Preliminaries}

A hyperhermitian manifold is a smooth manifold of real dimension $4n$ equipped with a triple of complex structures $I,J,K$ satisfying the quaternionic relations
\begin{equation}\label{q}
IJ=-JI=K\,. 
\end{equation}
Examples of hyperhermitian manifolds are given by: the {\em quaternionic vector space} $\mathbb H^n$, tori $\mathbb H^n/\Gamma$, where $\Gamma$ is a lattice in $\mathbb H^n$; Hopf manifolds of the form $((\mathbb C^n\times \mathbb C^n)\backslash \{(0,0)\})/\langle r\rangle $, where $r$ is a real number with $0< r<1$, and $\langle r\rangle$ is the group generated by the action 
$$
(z_\alpha,w_\alpha)\mapsto (r{\rm e}^{i\theta_\alpha}z_{\alpha},r{\rm e}^{-i\theta_\alpha}w_\alpha)
$$
and $\theta_1,\dots,\theta_n\in \mathbb{R}$.

\medskip 
A fundamental tool in  hypercomplex geometry is the so-called {\em Obata connection} introduced in \cite{Obata (1956)} as the unique torsion-free affine connection $\nabla$  which preserves all the three complex structures. This connection can be used to detect important properties of the hypercomplex structure. For instance it is flat if and only if the manifold  is locally isomorphic to $\mathbb H^n$. That implies strong restrictions on the geometry of the manifold since it turns out to be affine \cite{Sommese}. Moreover the condition ${\rm Hol}(\nabla)\subseteq {\rm SL}(n,\mathbb H)$ implies that each complex structure of the hyperhermitian structure has holomorphically trivial canonical bundle. 

\medskip 
A hyperhermitian metric on a hypercomplex manifold $(M,I,J,K)$ is a Riemannian metric $g$ which is compatible with each complex structure, i.e. 
$$
g(I\cdot,I\cdot)=g(J\cdot,J\cdot)=g(K\cdot,K\cdot)\,. 
$$
The metric $g$ induces the three fundamental forms 
$$
\omega(\cdot,\cdot):=g(I\cdot ,\cdot)\,,\quad \omega_J(\cdot,\cdot):=g(J\cdot ,\cdot)\,,\quad \omega_K(\cdot,\cdot):=g(K\cdot ,\cdot)\,.
$$
The quaternionic relations \eqref{q} imply that  the $2$-form 
$$
\Omega:=\frac12(\omega_J+i\omega_K)
$$
is of type $(2,0)$ with respect to $I$ and it is easy to see that satisfies the {\em q-real} condition 
$$
\Omega(J\cdot ,J\cdot)=\overline{\Omega(\cdot,\cdot)\,.}
$$
Furthermore $\Omega$ is \mbox{positive}, in the sense that 
$$
\Omega(Z,J\bar Z)>0
$$
for every non-zero vector field $Z$ of type $(1,0)$ with respect to $I$. Vice versa, every $\Omega\in \Lambda^{2,0}_I$  which is $q$-real and positive induces a hyperhermitian metric $g$ via the formula 
\[
g=2\Re (\Omega(\cdot,J\cdot))\,.
\]
On the other hand the form $\omega$ is a positive real form of type $(1,1)$ with respect to $I$ and it satisfies the $J$-anti-invariant relation 
$$
\omega(J\cdot,J\cdot)=-\omega(\cdot,\cdot) \,.
$$
Vice versa, if $\omega\in \Lambda^{1,1}_{I}$ is real positive and $J$-anti-invariant the induced Hermitian metric is hyperhermitian with respect to $(I,J,K)$.   Hence the following three spaces are in bijective correspondence  
\begin{itemize}
\item hyperhermitian metrics; 

\vspace{0.1cm}
\item $q$-real positive forms in $\Lambda^{2,0}_I$;

\vspace{0.1cm}
\item real positive $J$-anti-invariant forms in $\Lambda^{1,1}_I$.
\end{itemize}

A hyperhermitian metric is hyperk\"ahler if and only if the induced $\Omega $ is closed. In this case the three Bismut connections induced by $(g,I)$, $(g,J)$ and $(g,K)$ agree with the Levi-Civita connection of $g$. In \cite{Howe-Papadopoulos (2000)} Howe and Papadopoulos generalized the notion of hyperk\"ahler manifold to geometry with torsion by introducing {\em HKT manifolds}. An HKT manifold can be defined as a hyperhermitian manifold $(M,I,J,K,g)$ 
whose quaternionic form $\Omega$ satisfies 
$$
\partial \Omega=0\,,
$$
where $\partial$ is with respect to $I$ (this is in fact a characterization given in \cite{Grantcharov-Poon (2000)}).

In hypercomplex geometry the role that the $\partial \bar \partial$ operator plays in complex geometry is taken by the operator $\partial \partial_J$, where $\partial_J:= J^{-1}\bar \partial J\colon \Lambda^{r,0}_I\to \Lambda^{r+1,0}_I$. 
A smooth function $\varphi$ on $(M,I,J,K,\Omega)$ is {\em $\Omega$-plurisubharmonic} if $\Omega_\phi:=\Omega+\partial \partial_J\phi$ is $q$-positive. In such a case $\Omega_\phi$ gives a hyperhermitian metric on $(M,I,J,K)$. The $I$-fundamental form $\omega_\varphi$ of the metric induced by $\Omega_\phi$ takes the following expression 
\begin{equation}\label{fform}
\omega_\phi:=\omega+\frac{1}{2}\left(i\partial \bar\partial \varphi-iJ\partial \bar\partial \varphi\right)
\end{equation}
and, as observed in \cite[Section 2.3]{Dinew-Sroka}, problem \eqref{CY} is equivalent to the following equation stated in terms of $\omega_\phi$:
\begin{equation}\label{eqrewrite}
\omega_{\varphi}^{2n}={\rm e}^{2F+2b}\omega^{2n}\,,\quad \omega_{\varphi}>0\,,\quad {\rm sup}_M \varphi=0\,.
\end{equation}
Note that if $\Omega$ is HKT, then $\Omega_{\varphi}$ is HKT for every positive $\varphi$ and the class 
$$
[\Omega]_{\partial \partial_J}:=\{\Omega+\partial\partial_J f\,\,:\,\, f\in C^{\infty}(M)\}
$$
plays the role that the Bott-Chern cohomology class of $\omega$ plays in K\"ahler geometry. 

\section{Hyperhermitian metrics on hyperk\"ahler manifolds}
Let $(M,I,J,K)$ be a hyperhermitian manifold and denote by $R$ the curvature of the Obata connection $ \nabla$. Since $\nabla$ is torsion-free and preserves $I$ we can find $I$-holomorphic coordinates $\{z^r\}$ around $x_0$ that are geodesic with respect to $\nabla$ at $x_0$. Let
\[
R(X,Y)=\nabla_X\nabla_Y-\nabla_Y\nabla_X-\nabla_{[X,Y]}
\]
be the curvature endomorphisms, then we denote the components of the curvature as
\[
R\left(\frac{\partial}{\partial \bar z^j}, \frac{\partial}{\partial z^k}\right) \frac{\partial}{\partial \bar z^l}=R_{\bar jk\bar l}{}^{\bar i} \frac{\partial}{\partial \bar z^i}\,.
\]

The next useful proposition follows from \cite[Remark 2.13]{Dinew-Sroka}: 

\begin{prop}\label{pre}
With respect to $\{z^r\}$ we have
\begin{equation}\label{pre1}
J_{r,k}^{\bar s}=J^{\bar s}_{k,r}\,.
\end{equation}
Furthermore
\begin{eqnarray}
&&\label{pre2}  J_{r, k}^{\bar s}=J_{r,\bar k}^{\bar s}=0\,,\\
&&\label{pre3}  J_{	\bar j,k\bar l}^s J_{s}^{\bar i}=-
J_{	\bar j}^s J_{s,k\bar l}^{\bar i}
\,,\\
&&\label{pre4}   R_{\bar jk\bar l}\,^{\bar i}=J_{ a}^{\bar i}J_{   \bar l,k\bar j}^{a}
\end{eqnarray}
at $x_0$.   
\end{prop}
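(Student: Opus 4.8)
The plan is to derive all four identities from three structural facts about the Obata connection $\nabla$: it is torsion-free, it parallelises $I,J,K$, and in the chosen coordinates its Christoffel symbols vanish at $x_0$. First I would record that, because $\nabla$ preserves $I$, covariant differentiation respects the splitting into $(1,0)$ and $(0,1)$ parts, so in the $I$-holomorphic frame $\{\partial/\partial z^r\}$ the only possibly nonzero Christoffel symbols are the ``pure'' ones $\Gamma^t_{kr}$ and their conjugates $\Gamma^{\bar t}_{\bar k\bar r}$; the torsion-free condition then forces $\Gamma^t_{kr}=\Gamma^t_{rk}$ and makes the mixed symbols (such as $\Gamma^t_{\bar kr}$ and $\Gamma^{\bar i}_{k\bar l}$) vanish \emph{identically} on the coordinate patch, since each would otherwise represent a vector of the wrong type. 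Writing out $\nabla J=0$ in the block sending $(1,0)$ to $(0,1)$ vectors, the mixed symbols drop and I obtain $J^{\bar s}_{r,k}=\Gamma^t_{kr}J^{\bar s}_t$ throughout the patch; the symmetry of $\Gamma$ in its lower indices immediately yields \eqref{pre1}. Evaluating this identity, together with its $\bar\partial$-counterpart, at $x_0$ where $\Gamma=0$ gives \eqref{pre2}. By reality of $J$ (so $\overline{J^{\bar s}_r}=J^s_{\bar r}$) the conjugate statements $J^s_{\bar j,k}=J^s_{\bar j,\bar l}=0$ also hold at $x_0$.

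For \eqref{pre3} I would differentiate the purely algebraic identity $J^s_{\bar j}J^{\bar i}_s=-\delta^{\bar i}_{\bar j}$, which is just $J^2=-\mathrm{Id}$ read on $(0,1)$ vectors. Applying $\partial_k\partial_{\bar l}$ annihilates the constant right-hand side, and the Leibniz rule produces four terms; at $x_0$ the two cross terms each contain a single first derivative of $J$ that vanishes by \eqref{pre2} and its conjugate, leaving exactly $J^s_{\bar j,k\bar l}J^{\bar i}_s+J^s_{\bar j}J^{\bar i}_{s,k\bar l}=0$, which is \eqref{pre3}.

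The last identity \eqref{pre4} is the crux. I would first compute the curvature: since $\nabla_k\partial_{\bar l}=0$ identically (the mixed symbol vanishes) and the coordinate vector fields commute, so that $R(\partial_{\bar j},\partial_k)=\nabla_{\bar j}\nabla_k-\nabla_k\nabla_{\bar j}$, one term drops and the remaining piece gives $R_{\bar jk\bar l}{}^{\bar i}=-\partial_k\Gamma^{\bar i}_{\bar j\bar l}$ at $x_0$. Next, expanding $0=(\nabla_{\bar j}J)\partial_{\bar l}$ and again discarding the mixed symbols yields the patch-wide relation $J^a_{\bar l,\bar j}=\Gamma^{\bar s}_{\bar j\bar l}J^a_{\bar s}$; differentiating this in $z^k$ and evaluating at $x_0$ (where the undifferentiated $\Gamma$ term drops because $\Gamma=0$) gives $J^a_{\bar l,k\bar j}=(\partial_k\Gamma^{\bar s}_{\bar j\bar l})J^a_{\bar s}=-R_{\bar jk\bar l}{}^{\bar s}J^a_{\bar s}$. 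Contracting with $J^{\bar i}_a$ and using $J^a_{\bar s}J^{\bar i}_a=-\delta^{\bar i}_{\bar s}$ then produces $J^{\bar i}_aJ^a_{\bar l,k\bar j}=R_{\bar jk\bar l}{}^{\bar i}$, which is \eqref{pre4}.

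The only genuine obstacle is bookkeeping: one must track index types carefully and, above all, distinguish what holds \emph{identically} on the patch (the vanishing of the mixed Christoffel symbols and the relations coming directly from $\nabla J=0$) from what holds only \emph{at} $x_0$ (everything invoking $\Gamma=0$ or a bare first derivative of $J$). The structural engine behind every cancellation is the identical vanishing of the mixed symbols, which is precisely what collapses covariant derivatives to partial derivatives and forces $\nabla_k\partial_{\bar l}=0$; once this is in place the computation is routine.
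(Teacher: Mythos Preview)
Your proposal is correct and follows essentially the same route as the paper: both derive \eqref{pre1}--\eqref{pre2} from $\nabla J=0$ written as $J^{\bar s}_{r,k}=\Gamma^t_{kr}J^{\bar s}_t$ together with the symmetry and the vanishing of $\Gamma$ at $x_0$, obtain \eqref{pre3} by twice differentiating $J^s_{\bar j}J^{\bar i}_s=-\delta^{\bar i}_{\bar j}$, and prove \eqref{pre4} via $R_{\bar jk\bar l}{}^{\bar i}=-\partial_k\Gamma^{\bar i}_{\bar j\bar l}$ combined with the expression of $\Gamma$ in terms of $J$ and $\partial J$. The only cosmetic difference is that in \eqref{pre4} the paper substitutes $\Gamma^{\bar i}_{\bar j\bar l}=-J^{\bar i}_aJ^a_{\bar l,\bar j}$ first and then differentiates, whereas you differentiate $J^a_{\bar l,\bar j}=\Gamma^{\bar s}_{\bar j\bar l}J^a_{\bar s}$ and contract afterwards; these are the same computation rearranged.
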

\begin{proof}
Since $\nabla$ is torsion-free and preserves $I$ the mixed covariant derivatives $\nabla_{\bar r}\partial_{z^{s}}$ vanish and we have 
$$
 0=(\nabla_{k}J)\partial_{z^r}=\left (J_s^{\bar m}\Gamma_{kr}^s -
 J_{r,k}^{\bar m}\right)\partial_{\bar z^m}
$$ 
which implies 
$$
 J_{r,k}^{\bar m}=J_s^{\bar m}\Gamma_{kr}^s\quad \mbox{ and } \quad 
 \Gamma_{kl}^i=-J_{\bar m}^{i}J_{l,k}^{\bar m}\,. 
$$
Since $\Gamma_{kl}^i=\Gamma_{lk}^i$, then \eqref{pre1} follows.

Since the Christoffel symbols of $\nabla$ vanish at $x_0$, the first derivatives of $J_{r}^{\bar{s}}$ vanish at $x_0$ and \eqref{pre2} follows. 

To prove \eqref{pre3} we just compute 
$$
0=(J_{	\bar j}^s J_{s}^{\bar i})_{k\bar l}=J_{	\bar j,k\bar l}^s J_{s}^{\bar i}+
J_{	\bar j}^s J_{s,k\bar l}^{\bar i}
$$
at $x_0$.

For the last formula we observe 
$$
R_{\bar jk\bar l}\,^{\bar i}=-\Gamma_{\bar j \bar l,k}^{\bar i}=\partial_{k}(J_{a}^{\bar i}J_{\bar l,\bar j}^a) =J_{ a}^{\bar i}J_{   \bar l,k\bar j}^{a}
$$
at $x_0$, and the claim follows. 
\end{proof}

\begin{lem}\label{fund}
Let $g$ be a hyperhermitian metric. Then
\begin{equation}\label{fund1}
g^{i\bar j}R_{\bar j i \bar q}\,^{\bar a}=0
\end{equation}
at $x_0$.
In particular if  $\hat g$  is hyperk\"ahler, then
$$
g^{i\bar j}\hat g_{i\bar k}R_{\bar sr\bar j}\,^{\bar k}=0\,,
$$
for {\em every} hyperhermitian metric $g$ on $(M,I,J,K)$.
\end{lem}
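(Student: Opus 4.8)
The plan is to read off \eqref{fund1} from Proposition \ref{pre} together with the single algebraic consequence of hyperhermitianity that the proof actually needs, and then to deduce the hyperk\"ahler identity by lowering an index and using the Riemannian symmetries of the curvature. First I would use \eqref{pre4} to write
$g^{i\bar j}R_{\bar j i\bar q}\,^{\bar a}=J^{\bar a}_c\,g^{i\bar j}J^c_{\bar q,i\bar j}$,
so that \eqref{fund1} is equivalent to the vanishing of $g^{i\bar j}J^c_{\bar q,i\bar j}$ at $x_0$. The only way the metric enters is through hyperhermitianity: since the associated $(2,0)$-form $\Omega_{rb}=J^{\bar a}_r g_{b\bar a}$ is skew in $r,b$, the inverse metric inherits the dual skew-symmetry $\Psi^{id}:=g^{i\bar k}J^d_{\bar k}=-\Psi^{di}$, and one has $g^{i\bar j}=-\Psi^{id}J^{\bar j}_d$. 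Substituting this gives $g^{i\bar j}J^c_{\bar q,i\bar j}=-\Psi^{id}B_{id}$ with $B_{id}:=J^{\bar j}_d J^c_{\bar q,i\bar j}$, and the whole matter reduces to showing that $B_{id}$ is \emph{symmetric} in $i\leftrightarrow d$: being then contracted against the skew tensor $\Psi^{id}$, it is annihilated.

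The symmetry of $B_{id}$ is where the computation lives, and it is the step I expect to be the crux. I would proceed in three moves. First, to put the contracted index $\bar j$ into a lower slot, I conjugate \eqref{pre1} and differentiate it holomorphically, obtaining $J^c_{\bar q,i\bar j}=J^c_{\bar j,i\bar q}$ at $x_0$, so $B_{id}=J^{\bar j}_d J^c_{\bar j,i\bar q}$. Second, I differentiate the algebraic relation $J^{\bar j}_d J^c_{\bar j}=-\delta^c_d$ twice and discard the first-order terms by \eqref{pre2}, which yields $J^{\bar j}_d J^c_{\bar j,i\bar q}=-J^{\bar j}_{d,i\bar q}J^c_{\bar j}$, hence $B_{id}=-J^c_{\bar j}\,J^{\bar j}_{d,i\bar q}$. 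Third, differentiating \eqref{pre1} in the $\bar q$ direction shows that $J^{\bar j}_{d,i\bar q}$ is symmetric in $d\leftrightarrow i$; as the surviving factor $J^c_{\bar j}$ contains neither $i$ nor $d$, the tensor $B_{id}$ is symmetric and \eqref{fund1} follows. It is worth noting that this route uses only the skew-symmetry of $\Psi$, so the $q$-reality of $\Omega$ never intervenes; the delicate point is purely the orchestration of the symmetric-versus-skew clash above.

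For the hyperk\"ahler statement I would lower the free index of \eqref{fund1} with $\hat g$. Since $\hat g$ is hyperk\"ahler its Levi-Civita connection coincides with $\nabla$, so $R$ is the Riemannian curvature of $\hat g$ and $\hat g_{p\bar a}R_{\bar j i\bar q}\,^{\bar a}=\hat R_{\bar j i\bar q p}$ is its $(0,4)$-tensor; thus \eqref{fund1} becomes $g^{i\bar j}\hat R_{\bar j i\bar q p}=0$. Because $(M,I,\hat g)$ is K\"ahler, $\hat R$ obeys the usual skew-symmetries within each pair of indices and the pair-exchange symmetry. Applying the two pair-antisymmetries turns $g^{i\bar j}\hat R_{\bar j i\bar q p}$ into $g^{i\bar j}\hat R_{i\bar j p\bar q}$, and the same operations together with the pair exchange recast the target $g^{i\bar j}\hat g_{i\bar k}R_{\bar s r\bar j}\,^{\bar k}=g^{i\bar j}\hat R_{\bar s r\bar j i}$ as $g^{i\bar j}\hat R_{i\bar j r\bar s}$; after relabelling $(\bar q,p)\mapsto(\bar s,r)$ these coincide, giving the claim. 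The only thing left to verify is that this index bookkeeping is compatible with the placement of the contracting metrics, which is routine.
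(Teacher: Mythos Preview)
Your proposal is correct and follows essentially the same route as the paper. Both arguments reduce \eqref{fund1} to index gymnastics with the identities of Proposition~\ref{pre} together with hyperhermitianity of $g$; the paper presents this as a chain of equalities ending in $g^{i\bar j}R_{\bar j i\bar q}{}^{\bar a}=-g^{b\bar s}R_{\bar s b\bar q}{}^{\bar a}$, while you repackage the same computation as the contraction of the skew tensor $\Psi^{id}$ against a tensor $B_{id}$ shown to be symmetric --- these are two ways of writing the same cancellation, and your treatment of the hyperk\"ahler consequence via the K\"ahler curvature symmetries is likewise what the paper does (in one line).
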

\begin{proof}
We compute using Proposition \ref{pre}
\[
g^{i\bar j}R_{\bar j i \bar q}\,^{\bar a}=g^{i\bar j}J_b^{\bar a}J^b_{\bar q,i\bar j}=g^{i\bar j}J_b^{\bar a}J^b_{\bar j,i\bar q}=-g^{i\bar j}J_{b,i\bar q}^{\bar a}J^b_{\bar j}\,,
\]
where we used, in order, \eqref{pre4}, \eqref{pre1}, \eqref{pre3}. Now, since $g$ is hyperhermitian we have $g^{i\bar j}=g^{r\bar s}J_r^{\bar j}J_{\bar s}^i$ and thus
\[
g^{i\bar j}R_{\bar j i \bar q}\,^{\bar a}=-g^{r\bar s}J_r^{\bar j}J_{\bar s}^iJ_{b,i\bar q}^{\bar a}J^b_{\bar j}\\
=g^{b\bar s}J_{\bar s}^iJ_{b,i\bar q}^{\bar a}=g^{b\bar s}J_{\bar s}^iJ_{i,b\bar q}^{\bar a}=-g^{b\bar s}J_{\bar s,b\bar q}^iJ_{i}^{\bar a}=-g^{b\bar s}J_{\bar q,b\bar s}^iJ_{i}^{\bar a}=-g^{b\bar s}R_{\bar s b \bar q}\,^{\bar a}
\]
where, starting from the third equality, we used \eqref{pre1}, \eqref{pre3}, \eqref{pre1}, and finally \eqref{pre4} again. The identity \eqref{fund1} is proved. The last sentence follows since if $\hat g$ is hyperk\"ahler, then $R$ is the curvature of $\hat g$ and
\[
g^{i\bar j}\hat g_{i\bar k}R_{\bar sr\bar j}\,^{\bar k}=g^{i\bar j}\hat g_{r \bar k}R_{\bar ji\bar s}\,^{k}\,. \qedhere
\]
\end{proof}

\begin{rem}{\em 
From the proof of Lemma \ref{fund} it is clear that \eqref{fund1} still holds if instead of tracing with respect to a hyperhermitian metric we trace with respect to a hyperhermitian symmetric tensor. Also, an alternative perspective of Lemma \ref{fund} is as follows. Let $g$ be a hyperhermitian metric and $\{Z_1,\dots, Z_{2n}\}$ an orthonormal frame of type $(1,0)$ with respect to $I$ that is $J$-adapted, namely $Z_{2i}=J\bar Z_{2i-1}$. Let
\[
R(X,Y)=\nabla_X\nabla_Y-\nabla_Y\nabla_X-\nabla_{[X,Y]}
\]
be the curvature endomorphisms. Then the trace in \eqref{fund1} is essentially
\[
\sum_{j=1}^{2n} R(Z_j,\bar Z_j)=\frac{1}{2}\sum_{j=1}^{2n} \left( R(Z_j,\bar Z_j) + R(J \bar Z_j,J Z_j) \right)=\frac{1}{2}\sum_{j=1}^{2n} \left( R(Z_j,\bar Z_j) - R(J Z_j,J \bar Z_j) \right)=0\,,
\]
where the last equality is due to the identity $R(JX,JY)=R(X,Y)$.}
\end{rem}

The following lemma will be useful

\begin{lem}\label{fund2}
Let $\hat g$ be a hyperk\"ahler metric and let $g$ be a hyperhermitian metric. Then the following formulas hold 
\begin{eqnarray}
&& \label{1} g^{i\bar j}\hat g^{r\bar s}J_{r,i\bar j}^{\bar a} J_{\bar s}^b=0\,,\\
&& \label{2} g^{i\bar j}\hat g^{r\bar s}J_{r}^{\bar a}J_{\bar s,i\bar j}^b=0\,,\\
&& \label{3} g^{i\bar j}\hat g^{r   \bar s }J_{i}^{\bar b}J_{\bar j,r\bar s}^a=0\,,\\
&& \label{4} g^{i\bar j}\hat g^{r   \bar s }J_{i,r\bar s}^{\bar b}J_{\bar j}^a=0
\end{eqnarray}
at $x_0$. 
\end{lem}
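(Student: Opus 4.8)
The plan is to convert, in each of the four identities, the factor that carries the two derivatives of $J$ into a component of the Obata curvature $R$ by means of \eqref{pre4}, and then to recognise the surviving contraction against $g^{i\bar j}$ or $\hat g^{r\bar s}$ as one of the curvature traces that vanish by Lemma \ref{fund}. The decisive structural observation is that in every case the two derivative indices become precisely the first two indices of the curvature tensor, so that contracting them with the matching inverse metric produces exactly the trace appearing in \eqref{fund1} (or in its complex conjugate).

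First I would record the two ``inverted'' forms of \eqref{pre4}. Contracting \eqref{pre4} with $J^b_{\bar i}$ and using that $J$ interchanges $T^{1,0}_I$ and $T^{0,1}_I$ together with $J^2=-\mathrm{Id}$, that is $J^{\bar i}_a J^b_{\bar i}=-\delta^b_a$ and $J^i_{\bar a}J^{\bar b}_i=-\delta^{\bar b}_{\bar a}$, I obtain at $x_0$
\[
J^b_{\bar l,k\bar j}=-J^b_{\bar m}\,R_{\bar j k\bar l}{}^{\bar m}.
\]
Taking the complex conjugate of \eqref{pre4} — the Obata connection being real, so that $R_{j\bar k l}{}^{i}:=\overline{R_{\bar jk\bar l}{}^{\bar i}}$ are the conjugate curvature components — and contracting with $J^{\bar b}_i$ yields the companion identity
\[
J^{\bar b}_{l,j\bar k}=-J^{\bar b}_m\,R_{j\bar k l}{}^{m}.
\]
These express the second derivatives of $J$ with a barred, respectively unbarred, lower index in terms of $R$. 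I would also use, besides \eqref{fund1} itself, its complex conjugate, which after relabelling the summed indices reads $g^{i\bar j}R_{i\bar j q}{}^{a}=0$ (and likewise with $\hat g$); this is the ``holomorphic block'' version of the trace.

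Each of the four identities then follows by a single substitution. For \eqref{1} I write $J^{\bar a}_{r,i\bar j}=-J^{\bar a}_m R_{i\bar j r}{}^{m}$ and contract $i,\bar j$ with $g^{i\bar j}$, which vanishes by the conjugate of \eqref{fund1}; identity \eqref{4} is the same computation with $\hat g^{r\bar s}$ tracing $r,\bar s$ in $R_{r\bar s i}{}^{m}$. For \eqref{2} I write $J^b_{\bar s,i\bar j}=-J^b_{\bar m}R_{\bar j i\bar s}{}^{\bar m}$, and contracting with $g^{i\bar j}$ is exactly \eqref{fund1}; identity \eqref{3} is identical with $\hat g^{r\bar s}$ tracing $r,\bar s$ in $R_{\bar s r\bar j}{}^{\bar m}$. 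In every case the remaining factors $\hat g^{r\bar s},\,J^b_{\bar s},\,J^{\bar a}_r,\dots$ are inert and factor out of the vanishing trace, and since $\hat g$ is hyperkähler it is in particular hyperhermitian, so Lemma \ref{fund} applies equally to traces taken with $\hat g$. I do not expect any genuine obstacle: the only point requiring care is the bookkeeping of index positions — tracking which pair of indices is holomorphic and which anti-holomorphic, and hence whether to invoke \eqref{fund1} directly or its conjugate, and whether the relevant trace is taken with $g$ or with $\hat g$. Once the two inverted forms of \eqref{pre4} are in hand, each of the four identities reduces to a one-line contraction.
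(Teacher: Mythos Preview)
Your proposal is correct and follows essentially the same route as the paper: convert the second derivatives of $J$ into Obata curvature via \eqref{pre4} and then kill the resulting trace with Lemma~\ref{fund}. The only difference is organisational. The paper first uses hyperhermitianity of the metric attached to the undifferentiated $J$ (namely $\hat g$ for \eqref{1}, $g$ for \eqref{3}) to absorb that factor, then applies \eqref{pre3} and \eqref{pre4} inline; you instead invert \eqref{pre4} once and for all via $J^2=-\mathrm{Id}$ to express $J^{\bar b}_{l,j\bar k}$ and $J^{b}_{\bar l,k\bar j}$ directly as curvature components, and substitute. Your packaging is slightly more economical in that it dispenses with the hyperhermitianity step for the ``spectator'' metric, leaving the undifferentiated factors genuinely inert; the paper, conversely, derives only \eqref{1} and \eqref{3} by hand and obtains \eqref{2}, \eqref{4} by conjugation. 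Both arguments terminate in the same appeal to \eqref{fund1} (or its conjugate), applied to $g$ or to $\hat g$ as appropriate.
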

\begin{proof}
Using, hyperhermitianity of $\hat g$ and $J^2=-\mathrm{Id}$ we have 
$$
g^{i\bar j}\hat g^{r\bar s}J_{r,i\bar j}^{\bar a} J_{\bar s}^b=g^{i\bar j}\hat g^{p\bar q}J_p^{\bar s} J_{\bar q}^rJ_{r,i\bar j}^{\bar a} J_{\bar s}^b=
-g^{i\bar j}\hat g^{b\bar q} J_{\bar q}^rJ_{r,i\bar j}^{\bar a}=g^{i\bar j}\hat g^{b\bar q}R_{i\bar j\bar q}\,\!^{\bar a}=0
$$
at $x_0$, where the second-to-last equality is due to \eqref{pre3} and \eqref{pre4}, while the last step follows from Lemma \ref{fund}. 
Similarly, but using hyperhermitianity of $g$, we deduce
$$
g^{i\bar j}\hat g^{r   \bar s }J_{i}^{\bar b}J_{\bar j,r\bar s}^a=
g^{p\bar q}J_{p}^{\bar j}J_{\bar q}^i\hat g^{r   \bar s }J_{i}^{\bar b}J_{\bar j,r\bar s}^a=-
g^{p\bar b}\hat g^{r   \bar s }J_{p}^{\bar j}J_{\bar j,r\bar s}^a=
g^{p\bar b}\hat g^{r   \bar s }R_{\bar s rp}\,^a=0
$$
at $x_0$. Hence \eqref{1} and \eqref{3} follow. The other two relations can be obtained by conjugation.
\end{proof}

\section{A proof of the second order estimate}

\begin{teor}
Let $\varphi$ be a solution to \eqref{CY}, then $\hat \Delta\varphi:=\hat g^{r\bar s}\varphi_{r\bar s}$ has an a  priori upper 
bound.
\end{teor}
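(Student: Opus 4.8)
The plan is to run a maximum-principle argument on a suitable test function built from $\hat\Delta\varphi$. Let me lay out the strategy.

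\medskip

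The plan is to apply the maximum principle to a test function of the form $Q=\log(\hat g^{r\bar s}\varphi_{r\bar s})-A\varphi$ for a large constant $A$, possibly with an additional term in $\varphi$, as announced in the introduction (the function $Q$ of \eqref{Q}). Since $M$ is compact, $Q$ attains its maximum at some point $x_0$; I would choose the holomorphic $\nabla$-geodesic coordinates centered at $x_0$ provided by the preliminaries, so that all the identities of Proposition \ref{pre} and Lemmas \ref{fund}, \ref{fund2} are available there. First I would differentiate the Monge--Amp\`ere equation \eqref{eqrewrite} once and twice: the logarithm linearizes the equation into the operator $L:=\omega_\varphi^{i\bar j}\partial_i\partial_{\bar j}$ (the Chern Laplacian of the solution metric), and differentiating twice produces, after contracting against $\omega_\varphi^{i\bar j}$, a term of the shape $\omega_\varphi^{i\bar j}\omega_\varphi^{k\bar l}|\nabla(\omega_\varphi)_{k\bar l,i}|^2$ together with the Laplacian of $2F$. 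The standard Yau/Aubin calculus then gives $L(\hat\Delta\varphi)$ as a sum of a good positive third-order term, a curvature term coming from commuting covariant derivatives, and controllable lower-order pieces.

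\medskip

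The heart of the matter is the curvature term. When one commutes the covariant derivatives of $\varphi_{r\bar s}$ past the Chern Laplacian of $\hat g$, the error is exactly a contraction of the Obata/hyperk\"ahler curvature $R$ of $\hat g$ against $\omega_\varphi^{i\bar j}$ and $\hat g_{i\bar k}$, i.e.\ an expression of the form $\omega_\varphi^{i\bar j}\hat g_{i\bar k}R_{\bar s r\bar j}{}^{\bar k}$ (and its conjugate/transposed versions). This is precisely where Lemma \ref{fund} intervenes: since $\hat g$ is hyperk\"ahler, $R$ is its curvature, and the lemma asserts that this trace \emph{vanishes identically} for every hyperhermitian metric, in particular for $\omega_\varphi$. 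Likewise, the mixed third-order junk terms that arise from differentiating the nonsymmetric part of $\omega_\varphi$ (coming from the $-iJ\partial\bar\partial\varphi$ piece in \eqref{fform}) are exactly of the type handled by Lemma \ref{fund2}, so they drop out as well at $x_0$. This is the step where our choice of test function pays off: the extra terms that would otherwise force us to invoke the critical equation $dQ=0$ simply do not appear, so the curvature contribution is killed \emph{before} any such manipulation.

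\medskip

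Once the curvature term is eliminated, the remaining inequality at the maximum point reads schematically
\[
0\ \ge\ L(Q)(x_0)\ \ge\ \frac{(\text{good third-order term})}{\hat\Delta\varphi}\;-\;C\,(\hat\Delta\varphi)\;-\;A\,\mathrm{tr}_{\omega_\varphi}\hat g\;+\;(\text{bounded terms}),
\]
where $C$ depends only on $\hat g$ and $\|F\|_{C^2}$. Choosing $A$ large enough so that the $A\,\mathrm{tr}_{\omega_\varphi}\hat g$ term dominates, and absorbing the third-order term by a Cauchy--Schwarz estimate against $|\nabla\hat\Delta\varphi|^2$ in the usual way, yields an inequality that forces $(\hat\Delta\varphi)(x_0)$ to be bounded by a constant depending only on $\hat g$ and $F$. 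Since $Q$ is maximized at $x_0$, this bounds $Q$ everywhere, and combined with the $C^0$ estimate on $\varphi$ (already available in the literature, as recalled in the introduction) it gives the desired a priori upper bound on $\hat\Delta\varphi$ over all of $M$. The main obstacle I anticipate is the bookkeeping in the second differentiation of the equation: one must keep careful track of which third-order terms are genuinely good (and can be discarded or used in the Cauchy--Schwarz step) versus which require Lemmas \ref{fund} and \ref{fund2} to cancel, and verify that the non-K\"ahlerness of $\omega_\varphi$ (its failure to be symmetric) does not reintroduce uncontrolled torsion terms beyond those the two lemmas dispatch.
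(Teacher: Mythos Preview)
Your plan captures the heart of the argument---using Lemmas~\ref{fund} and~\ref{fund2} to kill the curvature and $J$-derivative contributions at the maximum point---and that is indeed exactly what the paper does. But your execution diverges from the paper in one structural choice and your schematic inequality is garbled in ways that matter.

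The paper's test function is $Q=\mathrm{tr}_{\hat g}g_\varphi - A\varphi$ \emph{without} a logarithm. This is the whole simplification: after the two lemmas clear the curvature and $J$-terms one obtains
\[
\Delta_\varphi\,\mathrm{tr}_{\hat g}g_\varphi \;=\; \hat g^{r\bar s}g_\varphi^{i\bar b}g_\varphi^{a\bar j}g_{\varphi,a\bar b,\bar s}\,g_{\varphi,i\bar j,r}\;+\;g_\varphi^{i\bar j}\hat g^{r\bar s}(g_{r\bar s,i\bar j}-g_{i\bar j,r\bar s})\;+\;h\,,
\]
and the nonnegative third-order quadratic term is simply \emph{discarded}. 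No Cauchy--Schwarz against $|\nabla(\mathrm{tr}_{\hat g}g_\varphi)|^2$ is ever needed. Your logarithm reintroduces precisely such a negative gradient term, and the ``usual'' Aubin--Yau Cauchy--Schwarz that would absorb it relies on the symmetry $g_{\varphi,a\bar b,r}=g_{\varphi,r\bar b,a}$, which fails for the $J$-twisted metric in \eqref{fform}; so that step is not routine here, and the paper's choice of $Q$ sidesteps the issue entirely.

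Separately, your displayed inequality has the wrong pieces. The $-A\varphi$ term contributes $-A\Delta_\varphi\varphi=-2nA+A\,\mathrm{tr}_{g_\varphi}g$, i.e.\ a \emph{positive} trace term (not $-A\,\mathrm{tr}_{\omega_\varphi}\hat g$), and the second-order error is bounded below by $-C\,\mathrm{tr}_{g_\varphi}\hat g$ (not $-C\,\hat\Delta\varphi$). The maximum principle therefore yields a bound on $\mathrm{tr}_{g_\varphi}\hat g$ at $x_0$, which is the wrong trace; one still needs the algebraic inequality $\mathrm{tr}_{\hat g}g_\varphi \le \tfrac{1}{(2n-1)!}(\mathrm{tr}_{g_\varphi}\hat g)^{2n-1}\,\omega_\varphi^{2n}/\hat\omega^{2n}$ together with the equation \eqref{eqrewrite} to convert this into a bound on $\mathrm{tr}_{\hat g}g_\varphi(x_0)$. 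Your outline omits this last conversion.
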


\begin{proof}
Let 
\begin{equation} \label{Q}
Q:=\mathrm{tr}_{\hat g}g_\phi -A\phi\,,
\end{equation}
where $A$ is a constant to be chosen later.

Denote $\Delta_\phi f:=g_\phi^{r\bar s}f_{r\bar s}$ the Chern-Laplacian with respect to $g_\phi$. We have
$$
\Delta_\varphi Q=\Delta_\varphi\mathrm{tr}_{\hat g}g_\phi-A\Delta_\varphi\phi=
\Delta_\varphi\mathrm{tr}_{\hat g}g_\phi-2nA+A \mathrm{tr}_{g_\varphi}g\,.
$$

Assume that $Q$ achieves a maximum at $x_0\in M$ and consider $I$-holomorphic coordinates around $x_0$ such that 
$$
\hat g_{r\bar s,k}=J_{r,k}^{\bar s}=J_{r,\bar k}^{\bar s}=0
$$
at $x_0$. To find such coordinates simply take normal coordinates for the hyperk\"ahler metric $\hat g$ at $x_0$. Indeed, since the Levi-Civita connection of $\hat g$ coincides with the Obata connection it is clear from Proposition \ref{pre} that \eqref{pre2} holds at $x_0$. Then we directly compute at $x_0$   
$$
\begin{aligned}
\Delta_\phi \mathrm{tr}_{\hat g}g_\phi&= g_\phi^{i\bar j} \left(((\hat g^{r\bar s})_{,i\bar j}g_{\phi \,r\bar s}+\hat g^{r\bar s} g_{\phi\, r\bar s, i \bar j}\right)\\
&=-g_\phi^{i\bar j}\hat g^{a\bar s}\hat g^{r\bar b}\hat g_{a\bar b,i\bar j}g_{\varphi\,r\bar s}+\frac{1}{2}g_\phi^{i\bar j}\hat g^{r\bar s}\left(2g_{r\bar s,i\bar j}+\phi_{r\bar s i\bar j}+J^{\bar a}_rJ^b_{\bar s}\phi_{b\bar a i\bar j} +J_{r,i\bar j}^{\bar a}J_{\bar s}^b\phi_{b\bar a}+J_{r}^{\bar a}J_{\bar s,i\bar j}^b\phi_{b\bar a}\right)\\
&=g_\phi^{i\bar j}\hat g^{a\bar s}\hat g^{r\bar b}\hat R_{a\bar bi\bar j}g_{\varphi\,r\bar s}
+g_\phi^{i\bar j}\hat g^{r\bar s}\left(g_{r\bar s,i\bar j}+\phi_{r\bar s i\bar j}\right) +\frac{1}{2}g_\phi^{i\bar j}\hat g^{r\bar s}\left(J_{r,i\bar j}^{\bar a}J_{\bar s}^b+J_{r}^{\bar a}J_{\bar s,i\bar j}^b\right) \phi_{b\bar a}\,,
\end{aligned}
$$
where $\hat R_{a\bar b i \bar j}=\hat g_{k\bar j} R_{a\bar b i}\,^{k}$ and we used the fact that $\hat g$ is $J$-Hermitian, so that $\hat g^{r\bar s}J_r^{\bar a}J_{\bar s}^b=\hat g^{b\bar a}$. Hence Lemmas \ref{fund} and \ref{fund2} imply 
\begin{equation}\label{Delta}
\begin{aligned}
\Delta_\phi \mathrm{tr}_{\hat g}g_\phi=
g_\phi^{i\bar j}\hat g^{r\bar s}\left(g_{r\bar s,i\bar j}+\phi_{r\bar s i\bar j}\right) \,.
\end{aligned}
\end{equation} 
Next we use equation \eqref{CY} to cancel the fourth order derivatives of $\phi$ in \eqref{Delta}. Using \eqref{fform} and \eqref{eqrewrite} we have 
$$
F+b=\log\frac{(\Omega+\partial\partial_J\varphi)^n}{\Omega^n}=\frac12\log\frac{(\omega+\tfrac12 \partial\bar \partial\varphi-\tfrac12 iJ\partial\bar \partial\varphi )^{2n}}{\omega^{2n}}
$$
and 
$$
\begin{aligned}
\partial_{r} F&=\frac{1}{2}\left( g_{\varphi}^{i\bar j}g_{\phi\, i\bar j,r}-g^{i\bar j}g_{i\bar j,r}\right)\\
&=\frac{1}{2}g_{\varphi}^{i\bar j}\left(g_{i\bar j,r}+\tfrac12 \varphi_{ri\bar j}
+\tfrac12 J_{i}^{\bar b}J_{\bar j}^a\varphi_{ra\bar b}
+\tfrac12 J_{i}^{\bar b}J_{\bar j,r}^a\varphi_{ra\bar b}
+\tfrac12 J_{i,r}^{\bar b}J_{\bar j}^a\varphi_{a\bar b}\right)-\frac{1}{2}g^{i\bar j}g_{i\bar j,r}
\end{aligned}
$$
as well as
\begin{multline*}
\partial_{r}\partial_{\bar s} F=
-\frac{1}{2}g_{\varphi}^{i\bar b}g_{\varphi}^{a\bar j}g_{\varphi\,a\bar b,\bar s}g_{\varphi\,i\bar j,r}\\ 
+\frac{1}{2}g_{\varphi}^{i\bar j}(g_{i\bar j,r\bar s}
+\tfrac12 \varphi_{r\bar s i\bar j }+\tfrac12 J_{i}^{\bar b}J_{\bar j}^a\varphi_{r\bar s a\bar b}
+\tfrac12 J_{i}^{\bar b}J_{\bar j,r\bar s}^a\varphi_{ra\bar b}
+\tfrac12 J_{i,r\bar s}^{\bar b}J_{\bar j}^a\varphi_{a\bar b})-H_{r\bar s}\,,
\end{multline*}
where $H_{r\bar s}$ only depends on $g$. It follows 
$$
\begin{aligned}
\partial_{r}\partial_{\bar s} F=&-\frac{1}{2}g_{\varphi}^{i\bar b}g_{\varphi}^{a\bar j}g_{\varphi\,a\bar b,\bar s}g_{\varphi\,i\bar j,r}
+\frac{1}{2}g_{\varphi}^{i\bar j}(g_{i\bar j,r\bar s}
+ \varphi_{r\bar s i\bar j }
+\tfrac12 J_{i}^{\bar b}J_{\bar j,r\bar s}^a\varphi_{ra\bar b}
+\tfrac12 J_{i,r\bar s}^{\bar b}J_{\bar j}^a\varphi_{a\bar b})-H_{r\bar s}
\end{aligned}
$$
and 
$$
\begin{aligned}
\hat \Delta F=&
-\frac{1}{2}\hat g^{r   \bar s }g_{\varphi}^{i\bar b}g_{\varphi}^{a\bar j}g_{\varphi\,a\bar b,\bar s}g_{\varphi\,i\bar j,r}
+\frac{1}{2}\hat g^{r   \bar s }g_{\varphi}^{i\bar j}(g_{i\bar j,r\bar s}
+ \varphi_{r\bar s i\bar j }
+\tfrac12 J_{i}^{\bar b}J_{\bar j,r\bar s}^a\varphi_{ra\bar b}
+\tfrac12 J_{i,r\bar s}^{\bar b}J_{\bar j}^a\varphi_{a\bar b})-\hat g^{r   \bar s }H_{r\bar s}\,.
\end{aligned}
$$
Hence Lemma \ref{fund2} implies 

$$
\begin{aligned}
\hat \Delta F=&-\frac{1}{2}\hat g^{r   \bar s }g_{\varphi}^{i\bar b}g_{\varphi}^{a\bar j}g_{\varphi\,a\bar b,\bar s}g_{\varphi\,i\bar j,r}
+\frac{1}{2}\hat g^{r   \bar s }g_{\varphi}^{i\bar j}(g_{i\bar j,r\bar s}
+ \varphi_{r\bar s i\bar j })-\hat g^{r   \bar s }H_{r\bar s}\,.
\end{aligned}
$$
We can then write 
$$
\hat g^{r   \bar s }g_{\varphi}^{i\bar j}\varphi_{r\bar s i\bar j }=
\hat g^{r   \bar s }g_{\varphi}^{i\bar b}g_{\varphi}^{a\bar j}g_{\varphi\,a\bar b,\bar s}g_{\varphi\,i\bar j,r}
-\hat g^{r   \bar s }g_{\varphi}^{i\bar j}g_{i\bar j,r\bar s}
+h\,,
$$
where $h$ is a smooth function on $M$ which does not depend on $\phi$. Thus \eqref{Delta} becomes 
\begin{equation*}
\begin{aligned}
\Delta_\phi \mathrm{tr}_{\hat g}g_\phi= \hat g^{r\bar s} g_{\varphi}^{i\bar b}g_{\varphi}^{a\bar j}g_{\varphi\,a\bar b,\bar s}g_{\varphi\,i\bar j,r}
+g_\phi^{i\bar j}\hat g^{r\bar s}\left(g_{r\bar s,i\bar j}-g_{i\bar j,r\bar s}\right)+h\,.
\end{aligned}
\end{equation*} 

Condition 
$$
0\geq \Delta_\varphi\mathrm{tr}_{\hat g}g_\phi-2nA+A \mathrm{tr}_{g_\varphi}g\quad  \mbox{ at } x_0
$$
reads as 
$$
0\geq  
\hat g^{r\bar s} g_{\varphi}^{i\bar b}g_{\varphi}^{a\bar j}g_{\varphi\,a\bar b,\bar s}g_{\varphi\,i\bar j,r}
+g_\phi^{i\bar j}\hat g^{r\bar s}\left(g_{r\bar s,i\bar j}-g_{i\bar j,r\bar s}\right)-2nA+A \mathrm{tr}_{g_\varphi}g+h
$$
which implies, after dropping the non-negative quadratic term (the first term in the inequality above),
$$
\begin{aligned}
0\geq &-C{\rm tr}_{g_\phi}\hat g+A \mathrm{tr}_{g_\varphi}g-C-2nA \quad \mbox{ at }x_0\,,
\end{aligned}
$$
where $C$ is a positive constant depending on $g$ and $h$ only.  
Using 
$$
\mathrm{tr}_{g_\varphi}g\geq A'\,\mathrm{tr}_{g_\varphi}\hat g\,,
$$
which holds for a positive constant $A'$ depending on $g$ and $\hat g$ only,  
we have 
$$
0\geq (AA'-C){\rm tr}_{g_\phi}\hat g-C-2nA \quad \mbox{ at }x_0\,.
$$
Hence by taking $A=\tfrac{C}{A'}+1$ we obtain 
$$
{\rm tr}_{g_\phi} \hat g\leq C \quad \mbox{ at }x_0\,.
$$

Now 
$$
{\rm tr}_{\hat g} g_\phi \leq \frac{1}{(2n-1)!} ({\rm tr}_{g_\phi }\hat g)^{2n-1}
\frac{\omega^{2n}_\phi }{\hat \omega^{2n}}
$$
implies 
$$
{\rm tr}_{\hat g} g_\varphi\leq C \quad \mbox{ at }x_0\,.
$$

It follows
$$
\mathrm{tr}_{\hat g}g_\phi -A\phi=Q\leq Q(x_0)=\mathrm{tr}_{\hat g}g_\phi(x_0) -A\phi(x_0)\leq C+A\|\phi\|_{C^0}\,,
$$
and since $\varphi$ satisfies an a priori $C^0$ bound the claim follows.  
\end{proof}

\begin{rem}\label{lastRem}
\emph{Our formula \eqref{Delta} corresponds to equation (4.5) in \cite{Dinew-Sroka}. 
The difference between the two approaches lies in the fact that Dinew and Sroka implicitly prove that the sum
\begin{equation}\label{eqns}
g_\phi^{i\bar j}\hat g^{a\bar s}\hat g^{r\bar b}\hat R_{a\bar bi\bar j}g_{\varphi\,r\bar s}
+\frac{1}{2}g_\phi^{i\bar j}\hat g^{r\bar s}\left(J_{r,i\bar j}^{\bar a}J_{\bar s}^b+J_{r}^{\bar a}J_{\bar s,i\bar j}^b\right) \phi_{b\bar a}
\end{equation}
vanishes, whereas we show the vanishing of each term in \eqref{eqns} separately using the curvature identities proved in Lemmas \ref{fund} and \ref{fund2}.}
\end{rem}

\end{document}